\def\today{\ifcase \month \or
   January \or February \or March \or April \or
   May \or June \or July \or August \or
   September \or October \or November \or December \fi
   \space\number\day , \number\year}
  \newcommand\@dotsep{4.5}
  \def\@tocline#1#2#3#4#5#6#7{\relax
     \ifnum #1>\c@tocdepth 
     \else
     \par \addpenalty\@secpenalty\addvspace{#2}%
     \begingroup \hyphenpenalty\@M
     \@ifempty{#4}{%
     \@tempdima\csname r@tocindent\number#1\endcsname\relax
        }{%
         \@tempdima#4\relax
           }%
      \parindent\z@ \leftskip#3\relax \advance\leftskip\@tempdima\relax
      \rightskip\@pnumwidth plus1em \parfillskip-\@pnumwidth
       #5\leavevmode\hskip-\@tempdima #6\relax
       \leaders\hbox{$\m@th
       \mkern \@dotsep mu\hbox{.}\mkern \@dotsep mu$}\hfill
       \hbox to\@pnumwidth{\@tocpagenum{#7}}\par
       \nobreak
        \endgroup
         \fi}
\begin{document}


\makeatletter
\@addtoreset{figure}{section}
\def\thefigure{\thesection.\@arabic\c@figure}
\def\fps@figure{h,t}
\@addtoreset{table}{bsection}

\def\thetable{\thesection.\@arabic\c@table}
\def\fps@table{h, t}
\@addtoreset{equation}{section}
\def\theequation{
\arabic{equation}}
\makeatother

\newcommand{\bfi}{\bfseries\itshape}

\newtheorem{theorem}{Theorem}
\newtheorem{acknowledgment}[theorem]{Acknowledgment}
\newtheorem{corollary}[theorem]{Corollary}
\newtheorem{definition}[theorem]{Definition}
\newtheorem{example}[theorem]{Example}
\newtheorem{lemma}[theorem]{Lemma}
\newtheorem{notation}[theorem]{Notation}
\newtheorem{problem}[theorem]{Problem}
\newtheorem{proposition}[theorem]{Proposition}
\newtheorem{question}[theorem]{Question}
\newtheorem{remark}[theorem]{Remark}
\newtheorem{setting}[theorem]{Setting}

\numberwithin{theorem}{section}
\numberwithin{equation}{section}

\renewcommand{\1}{{\bf 1}}
\newcommand{\Ad}{{\rm Ad}}
\newcommand{\Aut}{{\rm Aut}\,}
\newcommand{\ad}{{\rm ad}}
\newcommand{\botimes}{\bar{\otimes}}
\newcommand{\Ci}{{\mathcal C}^\infty}
\newcommand{\de}{{\rm d}}
\newcommand{\ee}{{\rm e}}
\newcommand{\End}{{\rm End}\,}
\newcommand{\id}{{\rm id}}
\newcommand{\ie}{{\rm i}}
\newcommand{\GL}{{\rm GL}}
\newcommand{\Gr}{{\rm Gr}}
\newcommand{\Hom}{{\rm Hom}\,}
\newcommand{\Ind}{{\rm Ind}}
\newcommand{\Ker}{{\rm Ker}\,}
\newcommand{\pr}{{\rm pr}}
\newcommand{\Ran}{{\rm Ran}\,}
\newcommand{\RRa}{{\rm RR}}
\newcommand{\rank}{{\rm rank}\,}
\renewcommand{\Re}{{\rm Re}\,}
\newcommand{\sa}{{\rm sa}}
\newcommand{\spa}{{\rm span}\,}
\newcommand{\tsr}{{\rm tsr}}
\newcommand{\Tr}{{\rm Tr}\,}

\newcommand{\CC}{{\mathbb C}}
\newcommand{\HH}{{\mathbb H}}
\newcommand{\RR}{{\mathbb R}}
\newcommand{\TT}{{\mathbb T}}

\newcommand{\Ac}{{\mathcal A}}
\newcommand{\Bc}{{\mathcal B}}
\newcommand{\Cc}{{\mathcal C}}
\newcommand{\Hc}{{\mathcal H}}
\newcommand{\Ic}{{\mathcal I}}
\newcommand{\Jc}{{\mathcal J}}
\newcommand{\Kc}{{\mathcal K}}
\newcommand{\Lc}{{\mathcal L}}
\renewcommand{\Mc}{{\mathcal M}}
\newcommand{\Nc}{{\mathcal N}}
\newcommand{\Oc}{{\mathcal O}}
\newcommand{\Pc}{{\mathcal P}}
\newcommand{\Vc}{{\mathcal V}}
\newcommand{\Xc}{{\mathcal X}}
\newcommand{\Yc}{{\mathcal Y}}
\newcommand{\Wc}{{\mathcal W}}

\renewcommand{\gg}{{\mathfrak g}}
\newcommand{\hg}{{\mathfrak h}}
\newcommand{\kg}{{\mathfrak k}}

\newcommand{\ZZ}{\mathbb Z}
\newcommand{\NN}{\mathbb N}

\makeatletter
\title[Real rank of $C^*$-algebras of solvable Lie groups]{On $C^*$-algebras of exponential solvable Lie groups 
and their real ranks}
\author{Ingrid Belti\c t\u a and Daniel Belti\c t\u a}
\address{Institute of Mathematics ``Simion Stoilow'' 
of the Romanian Academy, 
P.O. Box 1-764, Bucharest, Romania}
\email{ingrid.beltita@gmail.com, Ingrid.Beltita@imar.ro}
\email{beltita@gmail.com, Daniel.Beltita@imar.ro}
\thanks{This work was supported by a grant of the Romanian National Authority for Scientific Research and
Innovation, CNCS--UEFISCDI, project number PN-II-RU-TE-2014-4-0370}
\date{}
\makeatother

\begin{abstract} 
For any solvable Lie group whose exponential map $\exp_G\colon\gg\to G$ is bijective, 
we prove that the real rank of $C^*(G)$ is equal to $\dim(\gg/[\gg,\gg])$. 
We also indicate a proof of a similar formula for the stable rank of $C^*(G)$, as well as some estimates 
on the ideal generated by the projections in $C^*(G)$.\\
\textit{2010 MSC:} 22E27; 22D25 \\
\textit{Keywords:}  $C^*$-algebra; exponential Lie group; real rank; stable rank
\end{abstract}

\maketitle


\section{Introduction}

The stable rank of $C^*$-algebras was introduced independently in \cite{CoLa83} and \cite{Ri83}, 
and subsequently, its self-adjoint version, called real rank, was introduced in \cite{BrPe91}, 
where it was proved among other things that the condition of positive real rank is equivalent to 
the fact that the linear span of projections is not dense in the $C^*$-algebra under consideration. 
Much information has been obtained on the real rank of $C^*$-algebras of locally compact groups, 
and yet some problems remained open so far. 
In the present note we answer some of these problems,  
based on the method of coadjoint orbits of exponential solvable Lie groups, 
particularly on the results of \cite{Cu92} 
(see also the monograph \cite{FuLu15} for background information on that method). 

First, there is  the problem of investigating  the set of projections in $C^*(G)$ for any exponential Lie group $G$. 
We recall that by definition an exponential Lie group is any Lie group whose 
exponential map $\exp_G\colon\gg\to G$ is bijective, and this condition implies that $G$ is a solvable Lie group. 
Examples of exponential Lie groups include the connected, simply connected, nilpotent Lie groups, 
for which it was established in \cite[Th. 4]{Su96} that their $C^*$-algebras contain no nonzero projections. 
The existence of projections has been left open so far for general exponential Lie groups, whose $C^*$-algebras 
are not always liminary. 
It is well known that if $G$ is the $(ax+b)$-group, then $C^*(G)$ contains non-trivial projections. 
We provide more precise information on this phenomenon in Theorem~\ref{th_pr} and Remark~\ref{rem_pr} below.

The next problem is motivated by the problem above and by 
related investigations in 
\cite{SuTa97}: Compute the real rank $\RRa(C^*(G))$ for any exponential Lie group~$G$.
See also \cite{ArKa12}, where an answer was provided in particular for nilpotent Lie groups.

The answer to this problem is given in Theorem~\ref{th_exp}, 
and the analogous result on stable ranks is given in Theorem~\ref{th_exp_tsr}, 
which also fills some gaps in the literature (see Remark~\ref{cor_exp_tsr}).

\section{Preliminaries}

\begin{definition}
\normalfont
Let $\Ac$ be any unital $C^*$-algebra, 
and for any integer $n\ge0$ denote by $\Lc_n(\Ac)$ the set of all $n$-tuples 
$(a_0,\dots,a_n)\in\Ac^n$ with $\Ac a_0+\cdots+\Ac a_n=\Ac$.  
We also denote $\Ac^{\sa}:=\{a\in\Ac\mid a=a^*\}$. 

The \emph{stable rank} of $\Ac$ is defined by 
$$\tsr(\Ac):=\min\{n\ge 1\mid \text{$\Lc_n(\Ac)$ is dense in $\Ac^n$}\}$$ 
with the usual convention $\min\emptyset=\infty$. 
The \emph{real rank} of $\Ac$ is similarly defined by 
$$\RRa(\Ac):=\min\{n\ge 0\mid \text{$\Lc_{n+1}(\Ac)\cap(\Ac^{\sa})^{n+1}$ is dense in $(\Ac^{\sa})^{n+1}$}\}.$$ 
For any non-unital $C^*$-algebra, its real rank and its stable rank are defined as 
the real rank, respectively the stable rank, of its unitization. 
\end{definition}

\begin{remark}\label{rem2.2}
\normalfont
The real rank of a $C^*$-algebra was introduced in \cite{BrPe91} as a noncommutative version 
of the covering dimension of a topological space. 
In this sense, it was established in \cite[Prop. 1.1]{BrPe91} that 
if $X$ is any compact Hausdorff space, then the real rank of the commutative $C^*$-algebra $\Cc(X)$ is equal to the covering dimension of $X$. 
 
On the other hand, by \cite[Th. 2.6]{BrPe91}, the real rank of some $C^*$-algebra $\Ac$ is zero 
if and only if every self-adjoint element in $\Ac$ is the limit of a sequence of self-adjoint elements with finite spectra. 
Therefore, if $\Ac$ contains no projections, then its real rank is $\ge1$, 
and this points out the connection between the two problems in the Introduction.
\end{remark}

The combination of \cite[Th. 4.11]{BBL14} 
($C^*$-algebras of nilpotent Lie groups are special solvable) 
with the following proposition gives a short proof of one of the main results of \cite{ArKa12} 
in the special case of Lie groups, namely that $\RRa(C^*(G)) =\dim (\gg/[\gg, \gg])$ for any connected, simply connected, nilpotent Lie group $G$. 
This equality will be extended in Theorem~\ref{th_exp} to all exponential Lie groups, using however a different approach, 
because the $C^*(G)$ in that case might not be liminary. 
We refer to \cite{BBL14} for the definition of special solvable $C^*$-algebras.

\begin{proposition}\label{spec_solv}
Let $\Ac$ be any liminary, solvable $C^*$-algebra with a special solving series 
$$\{0\}=\Jc_0\subseteq\Jc_1\subseteq\cdots\subseteq\Jc_n=\Ac$$
with $\Jc_j/\Jc_{j-1}\simeq\Cc_0(\Gamma_j,\Kc(\Hc_j))$ for $j=1,\dots,n$. 
Then $\RRa(\Ac)=\dim\Gamma_n$.  
\end{proposition}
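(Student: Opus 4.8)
The plan is to induct on the length $n$ of the special solving series, and in fact to prove the sharper statement $\RRa(\Ac)=\max_{1\le j\le n}\dim\Gamma_j$, identifying this maximum with $\dim\Gamma_n$ only at the very end. The inductive step rests on the extension
$$0\longrightarrow\Jc_{n-1}\longrightarrow\Ac\longrightarrow\Ac/\Jc_{n-1}\longrightarrow0,$$
in which $\Ac/\Jc_{n-1}\simeq\Cc_0(\Gamma_n,\Kc(\Hc_n))$ while $\Jc_{n-1}$ is again liminary solvable with a special solving series of length $n-1$ and successive quotients $\Cc_0(\Gamma_j,\Kc(\Hc_j))$, $1\le j\le n-1$; the inductive hypothesis then gives $\RRa(\Jc_{n-1})=\max_{j<n}\dim\Gamma_j$.

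For the base case $n=1$ one has $\Ac\simeq\Cc_0(\Gamma_1)\otimes\Kc(\Hc_1)$, so the point is the identity $\RRa(\Cc_0(\Gamma,\Kc(\Hc)))=\dim\Gamma$. I would deduce it from the commutative computation $\RRa(\Cc_0(\Gamma))=\dim\Gamma$, which follows from \cite[Prop.~1.1]{BrPe91} applied to the unitization $\Cc(\Gamma^+)$ (using $\dim\Gamma^+=\dim\Gamma$ for the locally compact spaces that occur), together with the fact that this real rank is not altered by tensoring with $\Kc$. The latter is the one delicate point of the base case and deserves emphasis: whereas stabilization forces the \emph{stable} rank down to at most $2$, the real rank continues to be governed by the covering dimension of the spectrum. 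One sees directly that it is not lowered, since for $\dim\Gamma\ge1$ a nonconstant real $f\in\Cc_0(\Gamma)$ tensored with a rank-one projection is not a limit of self-adjoint elements of finite spectrum inside $\Cc_0(\Gamma,\Kc)$, and the longer tuples are treated in the same spirit.

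In the inductive step the lower bound is immediate: the real rank of a quotient does not exceed that of the algebra, so $\RRa(\Ac)\ge\RRa(\Ac/\Jc_{n-1})=\dim\Gamma_n$, while $\Jc_{n-1}$, being an ideal and hence a hereditary subalgebra, gives $\RRa(\Ac)\ge\RRa(\Jc_{n-1})=\max_{j<n}\dim\Gamma_j$; hence $\RRa(\Ac)\ge\max_{j\le n}\dim\Gamma_j$. The upper bound $\RRa(\Ac)\le\max\{\RRa(\Jc_{n-1}),\RRa(\Ac/\Jc_{n-1})\}$ is the crux and the main obstacle: the general extension inequality $\RRa(\Ac)\le\RRa(\Jc_{n-1})+\RRa(\Ac/\Jc_{n-1})+1$, with its unavoidable $+1$, is too weak, and one must replace the sum by a maximum. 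The model is the topological sum theorem $\dim X=\max\{\dim F,\dim(X\setminus F)\}$ for a closed $F\subseteq X$: here $\Jc_{n-1}$ is the open stratum of the spectrum and $\Gamma_n$ the closed one. Concretely, writing $m=\max_{j\le n}\dim\Gamma_j$, I would approximate the image in $\Cc_0(\Gamma_n,\Kc(\Hc_n))$ of a self-adjoint $(m+1)$-tuple by a tuple in $\Lc_{m+1}$, lift it to a self-adjoint tuple of $\Ac$ (self-adjoint lifts always exist), and then correct the lift inside $\Jc_{n-1}$ using $\RRa(\Jc_{n-1})\le m$, so that the corrected tuple lies in $\Lc_{m+1}(\Ac)$. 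The delicate part is to perform this correction without spoiling the invertibility already achieved modulo $\Jc_{n-1}$, so that the two conditions combine into genuine membership in $\Lc_{m+1}(\Ac)$ rather than merely in $\Lc_{2m+2}(\Ac)$; this is where the continuous-trace structure of the subquotients (local triviality over $\Gamma_j$, permitting a partition-of-unity patching that mirrors the proof of the sum theorem) is essential. Alternatively, one may simply invoke the known description of the real rank of a Type~I $C^*$-algebra of this kind as the covering dimension of its spectrum.

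Combining the two bounds yields $\RRa(\Ac)=\max_{1\le j\le n}\dim\Gamma_j$. It remains to see that for the special solving series at hand the maximum is attained at the top, $\max_j\dim\Gamma_j=\dim\Gamma_n$. In the situation of interest, where $\Ac=C^*(G)$, the top quotient $\Cc_0(\Gamma_n,\Kc(\Hc_n))$ is exactly the contribution of the characters of $G$, so that $\Gamma_n\cong\widehat{G/\overline{[G,G]}}$ and $\dim\Gamma_n=\dim(\gg/[\gg,\gg])$ dominates the dimensions of the lower strata, giving $\RRa(\Ac)=\dim\Gamma_n$.
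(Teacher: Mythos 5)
Your base case contains a genuine error that the rest of the argument inherits. You assert that $\RRa(\Cc_0(\Gamma,\Kc(\Hc)))=\dim\Gamma$ and that ``the real rank continues to be governed by the covering dimension of the spectrum'' after stabilization. This is false when $\Hc$ is infinite-dimensional: by Beggs--Evans \cite[Prop.~3.3]{BE91} one has $\RRa(\Bc\otimes\Kc)\le 1$ for \emph{every} $C^*$-algebra $\Bc$, so stabilization collapses the real rank to at most $1$, exactly as it collapses the stable rank to at most $2$ --- you have the asymmetry backwards. Your heuristic (a nonconstant $f$ tensored with a rank-one projection is not approximable by finite-spectrum self-adjoints) only shows $\RRa\ge 1$; the ``longer tuples treated in the same spirit'' do not behave the same way, and indeed Beggs--Evans shows they can always be approximated. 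Consequently your proposed sharper statement $\RRa(\Ac)=\max_j\dim\Gamma_j$ is also false in general (an intermediate stratum with $\dim\Gamma_j=5$ and $\dim\Hc_j=\infty$ contributes only $\le 1$), and your closing step --- identifying the maximum with $\dim\Gamma_n$ by appealing to properties of $C^*(G)$ --- imports hypotheses that are not part of the proposition, which is about abstract special solvable $C^*$-algebras.

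What actually makes the proposition work is the definition of a \emph{special} solving series: $\dim\Hc_j=\infty$ for $j=1,\dots,n-1$ while $\dim\Hc_n=1$. The paper's proof is then short: since $\Ac$ is liminary, \cite[Cor.~3.7]{Br07} gives $\RRa(\Ac)=\max_j\RRa(\Jc_j/\Jc_{j-1})$ (this is the extension-with-maximum result whose proof you correctly identify as the crux, but which you should cite rather than sketch); the intermediate quotients have real rank $\le 1$ by Beggs--Evans precisely \emph{because} they are stable; and the top quotient is commutative, $\Cc_0(\Gamma_n)$ with $\Gamma_n$ homeomorphic to a vector space, so \cite[Prop.~1.1]{BrPe91} gives $\RRa(\Jc_n/\Jc_{n-1})=\dim\Gamma_n\ge 1$. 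The maximum is therefore $\dim\Gamma_n$. Your inductive architecture and your lower bound via quotients and hereditary subalgebras are fine as far as they go, but without the dichotomy between the stable intermediate layers and the commutative top layer the conclusion cannot be reached.
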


\begin{proof}
The $C^*$-algebra $\Ac$ is liminary, hence it follows by \cite[Cor. 3.7]{Br07} 
that $\RRa(\Ac)=\max\{\RRa(\Jc_j/\Jc_{j-1})\mid j=1,\dots,n\}$. 
On the other hand, for $j=1,\dots,n-1$, we have $\dim\Hc_j=\infty$, hence 
$\RRa(\Cc_0(\Gamma_j,\Kc(\Hc_j))\le 1$ by \cite[Prop.~3.3]{BE91}. 
However $\dim\Hc_n=1$, hence $\Jc_n/\Jc_{n-1}\simeq\Cc_0(\Gamma_n)$ 
and then it follows by 
 \cite[Prop. 1.1]{BrPe91} that $\RRa(\Jc_n/\Jc_{n-1})=\dim \Gamma_n\ge 1$, 
since $\Gamma_n$ is homeomorphic to a vector space. 
Hence the assertion follows. 
\end{proof}

\section{Real rank for exponential Lie groups}

In this section we obtian one of our main results (Theorem~\ref{th_exp}), which gives a formula 
for computing $\RRa(C^*(G))$ in terms of the Lie algebra $\gg$, for ny exponential Lie group $G$, 
We must mention that the spectra finite-dimensionality hypotheses in Lemmas~\ref{RR1}--\ref{RR4}
are crucial for the conclusion of these lemma, and we need Lemma~\ref{dim} in order to check that 
these hypotheses are satisfied in the setting of the proof of Theorem~\ref{th_exp}.

\begin{lemma}\label{RR1}
Let $\Ac$ be any separable $C^*$-algebra with continuous trace, 
for which all its irreducible representations are infinite-dimensional. 
If moreover $\dim\widehat{\Ac}<\infty$, then $\RRa(\Ac)\le 1$. 
\end{lemma}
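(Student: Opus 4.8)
The plan is to combine the structure theory of continuous-trace $C^*$-algebras with the two real-rank tools already used in the proof of Proposition~\ref{spec_solv}: the reduction of the real rank of a liminary algebra to its subquotients via \cite[Cor.~3.7]{Br07}, and the bound $\RRa(\Cc_0(\Gamma,\Kc(\Hc)))\le1$ for $\dim\Hc=\infty$ from \cite[Prop.~3.3]{BE91}. First I would record what the hypotheses give. Since $\Ac$ has continuous trace, it is liminary and its spectrum $T:=\widehat{\Ac}$ is locally compact Hausdorff; since $\Ac$ is separable, $T$ is second countable, hence metrizable, and by hypothesis $\dim T<\infty$. Because every irreducible representation of $\Ac$ is infinite-dimensional and $\Ac$ is separable, the Dixmier--Douady theorem presents $\Ac$ as the algebra of $\Cc_0$-sections of a locally trivial bundle over $T$ with fibre $\Kc(\Hc)$, $\dim\Hc=\infty$; in particular $T$ is covered by open sets over each of which $\Ac$ restricts to a trivial bundle, i.e. is isomorphic to an algebra of the form $\Cc_0(\,\cdot\,,\Kc(\Hc))$. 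The goal then reduces to assembling these local trivializations into a \emph{finite} composition series of $\Ac$ by ideals with trivial subquotients.

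The crux is to produce such a finite series, and this is exactly where $\dim T<\infty$ enters. Let $\{W_i\}$ be the countable trivializing open cover above. Since $T$ is separable metrizable with $\dim T\le d<\infty$, dimension theory provides an open refinement of $\{W_i\}$ that splits into $d+1$ discrete families; writing $G_k$ ($k=0,\dots,d$) for the union of the $k$-th family, one obtains a finite open cover $T=G_0\cup\cdots\cup G_d$ in which each $G_k$ is a topologically disjoint (discrete) union of sets each contained in some $W_i$. Consequently the bundle is trivial over every $G_k$. Setting $U_k:=G_0\cup\cdots\cup G_k$ and letting $\Jc_k\subseteq\Ac$ be the ideal with spectrum $U_k$, we get
$$\{0\}=\Jc_{-1}\subseteq\Jc_0\subseteq\cdots\subseteq\Jc_d=\Ac,$$
and each subquotient $\Jc_k/\Jc_{k-1}$ is the continuous-trace algebra over the locally closed set $V_k:=U_k\setminus U_{k-1}\subseteq G_k$. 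As $V_k$ sits inside the trivializing set $G_k$, this subquotient is isomorphic to $\Cc_0(V_k,\Kc(\Hc))$, with $V_k$ locally compact Hausdorff and $\dim V_k\le\dim T<\infty$.

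It then remains to assemble the estimates. By \cite[Prop.~3.3]{BE91} we have $\RRa(\Cc_0(V_k,\Kc(\Hc)))\le1$ for each $k$, using $\dim\Hc=\infty$ together with $\dim V_k<\infty$. Since $\Ac$ is liminary and the above is a composition series of the type handled there, \cite[Cor.~3.7]{Br07} yields $\RRa(\Ac)=\max_{0\le k\le d}\RRa(\Jc_k/\Jc_{k-1})\le1$, as desired. I expect the main obstacle to be the second paragraph: verifying that finite covering dimension really yields a \emph{finite} filtration whose subquotients are \emph{trivial} continuous-trace algebras, i.e. controlling the Dixmier--Douady twist. Finite-dimensionality is essential here both to split the trivializing cover into finitely many discrete families and to keep each base $V_k$ finite-dimensional so that \cite[Prop.~3.3]{BE91} applies; without it one would at best obtain a countable series and would have to argue separately that the real-rank bound passes to the inductive limit.
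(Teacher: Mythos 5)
Your argument is essentially correct, but it takes a genuinely different and considerably longer route than the paper. The paper's proof is two lines: by \cite[Cor.~IV.1.7.22]{Bl06}, a separable continuous-trace $C^*$-algebra with finite-dimensional spectrum whose irreducible representations are all infinite-dimensional is \emph{stable}, so $\Ac\simeq\Ac\otimes\Kc$, and then $\RRa(\Ac\otimes\Kc)\le1$ directly by \cite[Prop.~3.3]{BE91}, with no dimension restriction on the spectrum needed at that stage. You instead pass through local triviality of the bundle, chop the spectrum into $d+1$ discrete trivializing families using the decomposition characterization of covering dimension for metrizable spaces (a correct standard fact), and reassemble the real-rank estimate through a finite composition series via \cite[Cor.~3.7]{Br07}; this works (one could equally invoke the paper's Proposition~\ref{RR5}, which does not depend on the present lemma), but it re-proves by hand what stability gives for free. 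One point you should correct is the accounting of where $\dim\widehat{\Ac}<\infty$ enters: the local triviality you attribute to Dixmier--Douady is \emph{not} a consequence of separability and infinite-dimensionality of the fibres alone --- it is precisely the finite-dimensionality of the spectrum that makes the Dixmier--Douady local triviality theorem (equivalently, stability) available, and continuous-trace algebras over infinite-dimensional spectra can fail to be locally trivial or stable. By contrast, the two places where you declare finite-dimensionality ``essential'' are less critical: the splitting into finitely many discrete families is indeed used in your filtration, but \cite[Prop.~3.3]{BE91} requires no finite-dimensionality of the base spaces $V_k$ (compare its use in Proposition~\ref{spec_solv}). Since the hypothesis is available, your proof goes through; only the explanation of where it is used should be fixed.
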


\begin{proof}
It follows by \cite[Cor. IV.1.7.22]{Bl06} that $\Ac$ is stable, 
hence there exists a $*$-isomorphism $\Ac\simeq \Ac\otimes\Kc$, 
where $\Kc$ is the $C^*$-algebra of compact operators on some separable infinite-dimensional Hilbert space. 
On the other hand, by \cite[Prop. 3.3]{BE91}, one has $\RRa(\Ac\otimes\Kc)\le 1$, 
and then $\RRa(\Ac)\le 1$ as well. 
\end{proof}

\begin{lemma}\label{RR4}
Let $\Ac$ be any $C^*$-algebra with an ideal $\Jc$ 
that is a continuous trace $C^*$-algebra with $\dim(\widehat{\Jc})< \infty$, and such that all irreducible representations of $\Jc$ are infinite dimensional.  
Then $\RRa(\Ac)=\max\{\RRa(\Jc),\RRa(\Ac/\Jc)\}$. 
\end{lemma}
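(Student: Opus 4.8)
The plan is to combine the two standard one-sided estimates for the real rank of the extension $0\to\Jc\to\Ac\to\Ac/\Jc\to0$ with the special features of the ideal $\Jc$. First I would record that, by the argument of Lemma~\ref{RR1}, the hypotheses on $\Jc$ force $\RRa(\Jc)\le1$: the point is that \cite[Cor. IV.1.7.22]{Bl06} makes $\Jc$ stable, so that $\Jc\simeq\Jc\otimes\Kc$, whereupon \cite[Prop. 3.3]{BE91} applies. This bound on $\RRa(\Jc)$ is exactly what will let the maximum on the right-hand side be attained, and it is used in both inequalities below.

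For the inequality $\RRa(\Ac)\ge\max\{\RRa(\Jc),\RRa(\Ac/\Jc)\}$ I would treat the two terms separately. The estimate $\RRa(\Ac/\Jc)\le\RRa(\Ac)$ is the general monotonicity of the real rank under passage to quotients (lift a self-adjoint tuple, approximate it within $\Lc_{n+1}(\Ac)$, and project down). For the term $\RRa(\Jc)$ I would distinguish according to whether $\RRa(\Ac)$ vanishes: if $\RRa(\Ac)=0$ then, since the real rank zero property is inherited by hereditary subalgebras and in particular by ideals, also $\RRa(\Jc)=0$; and if $\RRa(\Ac)\ge1$, then $\RRa(\Jc)\le1\le\RRa(\Ac)$ by the first step. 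In either case $\RRa(\Jc)\le\RRa(\Ac)$, so the lower bound holds.

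The harder inequality is $\RRa(\Ac)\le\max\{\RRa(\Jc),\RRa(\Ac/\Jc)\}=:n$. Writing $\pi\colon\Ac\to\Ac/\Jc$, I would start from a self-adjoint tuple $(a_0,\dots,a_n)\in(\Ac^{\sa})^{n+1}$ and approximate it by an element of $\Lc_{n+1}(\Ac)\cap(\Ac^{\sa})^{n+1}$ by a two-step lifting: use $\RRa(\Ac/\Jc)\le n$ to perturb the image tuple $(\pi(a_0),\dots,\pi(a_n))$ into $\Lc_{n+1}(\Ac/\Jc)$, lift the perturbation self-adjointly, and then remove the remaining defect, which now lies in $\Jc$, by invoking $\RRa(\Jc)\le n$ together with the stability $\Jc\simeq\Jc\otimes\Kc$. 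The main obstacle is precisely this correction inside $\Jc$: in the general extension inequality it costs an extra unit in the real rank, and the crux is to verify that the stability of $\Jc$ provides enough room to absorb and rotate the correction so that $n$, rather than $n+1$, suffices. The most delicate case is $n=0$, where both $\Jc$ and $\Ac/\Jc$ have real rank zero and one must approximate an arbitrary self-adjoint element of $\Ac$ by self-adjoint elements with finite spectra (cf. Remark~\ref{rem2.2}); here I expect the continuous-trace structure of $\Jc$ to be used to lift projections and to check that no surviving $K$-theoretic obstruction prevents $\RRa(\Ac)=0$.
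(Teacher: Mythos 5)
Your proposal has a genuine gap in the upper bound, and it is exactly at the point you yourself flag as ``the crux.'' The paper does not prove this lemma by hand at all: it simply observes that the statement is a special case of \cite[Thm.~3.12(ii)]{Br07}, a nontrivial result of L.~G.~Brown whose entire content is the inequality $\RRa(\Ac)\le\max\{\RRa(\Jc),\RRa(\Ac/\Jc)\}$ under precisely these hypotheses on $\Jc$ (continuous trace, finite-dimensional spectrum, all irreducible representations infinite-dimensional). Your two-step lifting argument, as written, only reproduces the general extension estimate, which carries an extra ``$+1$'' coming from the correction term in $\Jc$; the assertion that stability of $\Jc$ ``provides enough room to absorb and rotate the correction so that $n$ rather than $n+1$ suffices'' is stated as something to be verified, not verified. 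Likewise, in the case $n=0$ you explicitly say you ``expect'' the continuous-trace structure to kill the $K$-theoretic obstruction to lifting projections; this is a real issue (extensions of real rank zero algebras by real rank zero algebras need not have real rank zero in general), and it is not resolved by anything in your sketch. So the hard direction is asserted rather than proved.

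The parts you do carry out are correct and consistent with the paper's surrounding arguments: the observation that $\RRa(\Jc)\le1$ via stability (\cite[Cor.\ IV.1.7.22]{Bl06}) and \cite[Prop.~3.3]{BE91} is exactly the content of Lemma~\ref{RR1}, and your lower bound $\RRa(\Ac)\ge\max\{\RRa(\Jc),\RRa(\Ac/\Jc)\}$ (monotonicity under quotients, plus the case split on whether $\RRa(\Ac)=0$) is sound. To close the argument you should either cite \cite[Thm.~3.12(ii)]{Br07} for the upper bound, as the paper does, or supply the missing absorption/lifting argument in full, including the treatment of the $K$-theoretic obstruction when $\max\{\RRa(\Jc),\RRa(\Ac/\Jc)\}=0$.
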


\begin{proof}
This is a special case of \cite[Thm.~3.12(ii)]{Br07}.
\end{proof}

\begin{proposition}\label{RR5}
Let $\Ac$ be any $C^*$-algebra with a family of closed two-sided ideals 
$$\{0\}=\Jc_0\subseteq\Jc_1\subseteq\cdots\subseteq\Jc_n=\Ac$$
where for each for $j=2,\dots,n$,   $\Jc_j/\Jc_{j-1}$ has continuous trace, with its spectrum of finite covering dimension, and all its representations are infinite dimensional.   
Then 
$$\RRa(\Ac)=\max\{\RRa(\Jc_j/\Jc_{j-1})\mid j=1,\dots,n\}.$$  
\end{proposition}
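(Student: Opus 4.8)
The plan is to prove Proposition~\ref{RR5} by induction on the length $n$ of the composition series, using Lemma~\ref{RR4} as the inductive engine. The base case $n=1$ is trivial, since then $\Ac=\Jc_1/\Jc_0$ and the right-hand side reduces to $\RRa(\Jc_1)=\RRa(\Ac)$. For the inductive step I would exploit the fact that $\Jc_{n-1}$ is itself equipped with the truncated series $\{0\}=\Jc_0\subseteq\cdots\subseteq\Jc_{n-1}$, whose successive quotients (for $j=2,\dots,n-1$) satisfy exactly the continuous-trace, finite-dimensional-spectrum, infinite-dimensional-representation hypotheses demanded by the proposition. Thus by induction one obtains
$$\RRa(\Jc_{n-1})=\max\{\RRa(\Jc_j/\Jc_{j-1})\mid j=1,\dots,n-1\}.$$

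Next I would apply Lemma~\ref{RR4} to the pair consisting of the algebra $\Ac$ and its ideal $\Jc_{n-1}$. The subtle point here is that Lemma~\ref{RR4} requires the \emph{ideal} itself to have continuous trace with finite-dimensional spectrum and only infinite-dimensional irreducible representations, whereas the hypotheses of the proposition only guarantee these properties for the \emph{subquotients} $\Jc_j/\Jc_{j-1}$. So I cannot directly invoke Lemma~\ref{RR4} with $\Jc=\Jc_{n-1}$. Instead, the natural move is to apply Lemma~\ref{RR4} to the quotient algebra $\Ac/\Jc_{n-2}$ together with its ideal $\Jc_{n-1}/\Jc_{n-2}$: this ideal is precisely the subquotient $\Jc_{n-1}/\Jc_{n-2}$, which by hypothesis (taking $j=n-1$) is a continuous-trace $C^*$-algebra with finite-covering-dimension spectrum and infinite-dimensional irreducible representations. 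Lemma~\ref{RR4} then yields
$$\RRa(\Ac/\Jc_{n-2})=\max\{\RRa(\Jc_{n-1}/\Jc_{n-2}),\,\RRa(\Ac/\Jc_{n-1})\}.$$

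To thread these two computations together I would set up the induction so that the running hypothesis is applied not to the ideal $\Jc_{n-1}$ but to the successive quotients, peeling off one subquotient at a time from the top. Concretely, the cleanest formulation is to induct on the statement applied to the quotient algebras $\Ac/\Jc_k$ with their induced series, so that at each stage Lemma~\ref{RR4} is invoked with an ideal that is genuinely one of the prescribed subquotients. The main obstacle I anticipate is exactly this bookkeeping mismatch between the proposition's per-subquotient hypotheses and Lemma~\ref{RR4}'s per-ideal hypotheses: one must be careful that every time Lemma~\ref{RR4} is applied, the ideal in question is of the form $\Jc_j/\Jc_{j-1}$ (an honest subquotient) rather than a longer piece $\Jc_{j-1}$ of the filtration, for which the continuous-trace hypothesis need not hold. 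Once the induction is organized to respect this constraint, combining the two displayed equalities and collapsing the nested maxima gives the claimed formula, and no further analytic input beyond Lemma~\ref{RR4} is required.
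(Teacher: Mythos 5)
Your overall strategy (induction along the filtration with Lemma~\ref{RR4} as the engine) is the same as the paper's, and you have correctly put your finger on a real tension: Lemma~\ref{RR4} places its continuous-trace hypotheses on the \emph{ideal}, while the proposition as stated places them on the \emph{subquotients} $\Jc_j/\Jc_{j-1}$ for $j=2,\dots,n$. However, your proposed reorganization does not close the gap it identifies. The two displayed equalities do not chain together: the first computes $\RRa(\Jc_{n-1})$ (an ideal) and the second computes $\RRa(\Ac/\Jc_{n-2})$ (a quotient), and to pass from either to $\RRa(\Ac)$ you would still need an identity of the form $\RRa(\Ac)=\max\{\RRa(\Jc_k),\RRa(\Ac/\Jc_k)\}$ for some $k\ge 1$, which Lemma~\ref{RR4} does not supply because the long piece $\Jc_k$ of the filtration is not assumed to have continuous trace. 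The same obstruction survives your top-down induction: peeling $\Jc_{k+1}/\Jc_k$ off $\Ac/\Jc_k$ yields $\RRa(\Ac/\Jc_k)=\max\{\RRa(\Jc_j/\Jc_{j-1})\mid j=k+1,\dots,n\}$ for all $k\ge 1$, but the final step, from $\Ac/\Jc_1$ down to $\Ac$ itself, forces you to apply Lemma~\ref{RR4} to the extension $0\to\Jc_1\to\Ac\to\Ac/\Jc_1\to 0$, and under the hypothesis as printed the ideal $\Jc_1$ carries no continuous-trace assumption at all. So the bottom layer is exactly as unjustified as the step you set out to avoid, and the proof has a genuine gap there.

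The resolution is that the index range in the statement is an off-by-one slip: both the paper's own proof (which quotients by $\Jc_1$, applies the induction hypothesis to $\Ac/\Jc_1$, and then invokes Lemma~\ref{RR4} with the ideal $\Jc_1$) and the only application (Theorem~\ref{th_exp}, where the top subquotient is the commutative algebra $\Cc_0([\gg,\gg]^\perp)$, whose irreducible representations are one-dimensional) require the continuous-trace hypotheses for $j=1,\dots,n-1$, i.e.\ for every subquotient \emph{except the top one}, not except the bottom one. With that corrected hypothesis the bottom-up induction is immediate ($\Jc_1$ is then itself a continuous-trace ideal with finite-dimensional spectrum and infinite-dimensional irreducible representations, so $\RRa(\Ac)=\max\{\RRa(\Jc_1),\RRa(\Ac/\Jc_1)\}$), and your top-down scheme becomes essentially the same argument unrolled. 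As written against the literal statement, though, neither your argument nor the paper's legitimately handles the extension by $\Jc_1$.
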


\begin{proof}
We proceed by induction on $n$. 
The case $n=1$ is obvious. 
If we assume $n\ge 2$ and the assertion already proved for $n-1$, 
then, using the family of 
closed two-sided ideals 
$$\{0\}=\Jc_1/\Jc_1\subseteq\Jc_2/\Jc_1\subseteq\cdots\subseteq\Jc_n/\Jc_1=\Ac/\Jc_1$$
of $\Ac/\Jc_1$ for which $(\Jc_j/\Jc_1)/(\Jc_{j-1}/\Jc_1)\simeq \Jc_j/\Jc_{j-1}$ has Hausdorff spectrum,  for $j=2,\dots,n$, 
then the induction hypothesis implies 
$$\RRa(\Ac/\Jc_1)=\max\{\RRa(\Jc_j/\Jc_{j-1})\mid j=2,\dots,n\}.$$  
On the other hand, by Lemma~\ref{RR4} we have 
 $\RRa(\Ac)=\max\{\RRa(\Jc_1),\RRa(\Ac/\Jc_1)\}$, hence 
we directly obtain the assertion for $n$, and this completes the proof. 
\end{proof}

\begin{lemma}\label{dim}
Let $X$ be a metric space and $A$ a locally closed subset of $X$.
Then $\dim A\le \dim X$. 
\end{lemma}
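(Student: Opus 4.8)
The plan is to factor the locally closed inclusion $A\hookrightarrow X$ through the closure $\overline{A}$, thereby reducing the statement to the two standard monotonicity properties of the covering dimension in the metric (equivalently, perfectly normal) setting: monotonicity under closed subspaces and monotonicity under open, or more generally $F_\sigma$, subspaces. Since $A$ is locally closed, by definition it is open in its closure $\overline{A}$, and the subspace topology that $A$ inherits from $X$ coincides with the one it inherits from $\overline{A}$. Thus it suffices to establish the two inequalities $\dim A\le\dim\overline{A}$ and $\dim\overline{A}\le\dim X$ and then combine them.

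First I would dispose of the closed inclusion $\overline{A}\hookrightarrow X$. As $\overline{A}$ is a closed subset of the metric space $X$, the closed subspace theorem for covering dimension, which holds in every normal space, yields $\dim\overline{A}\le\dim X$ immediately.

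The second inequality is the step that genuinely uses the metric hypothesis. Here $A$ is open in the metric space $\overline{A}$; since metric spaces are perfectly normal, every open set is an $F_\sigma$, so I would write $A=\bigcup_{i\ge 1}F_i$ with each $F_i$ closed in $\overline{A}$ and $F_i\subseteq A$. Each $F_i$ is then closed in the metric space $A$, and the closed subspace theorem gives $\dim F_i\le\dim\overline{A}$. The countable sum theorem for covering dimension, valid in metric spaces, now yields $\dim A=\dim\bigcup_{i\ge 1}F_i\le\sup_i\dim F_i\le\dim\overline{A}$, and combining with the first step gives $\dim A\le\dim\overline{A}\le\dim X$.

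The hard part, or rather the point requiring care, is exactly this second step: covering dimension is \emph{not} monotone under arbitrary subspaces of general topological spaces, so the passage from $\overline{A}$ to its open subset $A$ cannot be taken for granted. It relies essentially on perfect normality of metric spaces, to produce the $F_\sigma$ decomposition, together with the countable sum theorem; by contrast, the closed inclusion in the first step needs only normality. An alternative route would invoke the Katětov--Morita coincidence $\dim=\mathrm{Ind}$ for metrizable spaces and the monotonicity of the inductive dimension, but the $F_\sigma$ argument above is more elementary and keeps the proof self-contained.
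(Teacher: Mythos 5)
Your proof is correct and uses essentially the same argument as the paper: both reduce the statement to the closed-subspace theorem together with the countable sum theorem, via an $F_\sigma$ decomposition coming from perfect normality of metric spaces. The only difference is cosmetic --- you use the ``open in its closure'' characterization of locally closed and route through $\overline{A}$, whereas the paper writes $A=D\cap F$ with $D$ open and $F$ closed and decomposes $D$ as an $F_\sigma$ in $X$ directly.
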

\begin{proof}
The set $A$ is locally closed, hence there are sets $D\subseteq X$, $F\subseteq X$, with $D$ open and 
$F$ closed such that $A= D\cap F$. 

On the other hand $D$ is an $F_\sigma$ set in the metric space $X$, that is,  there is a countable family of closed subsets $F_n$, $n \ge 0$, such that $D= \cup_{n\ge 1} F_n$. 
Hence $A= \cup_{n\ge 1} (F_n \cap F)$, and from \cite[Prop.~3.1.5, Thm.~3.2.5, Thm.~3.2.7]{Pea75}
it follows that $\dim A\le \sup_{n\ge 1}\dim(F_n \cap F) \le \dim X$
\end{proof}

\begin{theorem}\label{th_exp}
For every exponential Lie group $G$ with its Lie algebra $\gg$, 
we have 
$$\RRa(C^*(G))=\dim(\gg/[\gg,\gg]).$$ 
\end{theorem}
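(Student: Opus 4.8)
The plan is to reduce the computation of $\RRa(C^*(G))$ to the continuous-trace machinery of Lemmas~\ref{RR1}--\ref{RR4} and Proposition~\ref{RR5}, feeding in the coadjoint-orbit description of $\widehat{G}$ from \cite{Cu92}. First I would record the structural facts: since $G$ is exponential it is simply connected solvable, its closed commutator subgroup $\overline{[G,G]}$ satisfies $G/\overline{[G,G]}\cong\RR^d$ with $d:=\dim(\gg/[\gg,\gg])$, and the Kirillov map identifies $\widehat{G}$ with the orbit space $\gg^*/G$. Under this identification the one-dimensional representations correspond exactly to the $\Ad^*(G)$-fixed points, which form the linear subspace $[\gg,\gg]^\perp\cong(\gg/[\gg,\gg])^*\cong\RR^d$; every other coadjoint orbit is symplectic of positive (even) dimension and hence yields an infinite-dimensional irreducible representation.

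The key structural input I would take from \cite{Cu92} is a finite stratification of $\gg^*$ into $\Ad^*(G)$-invariant, locally closed layers with Hausdorff orbit spaces, giving a composition series
$$\{0\}=\Jc_0\subseteq\Jc_1\subseteq\cdots\subseteq\Jc_n=C^*(G)$$
with continuous-trace subquotients $\Jc_j/\Jc_{j-1}\simeq\Cc_0(\Gamma_j,\Kc(\Hc_j))$, where $\Gamma_j$ is the orbit space of the $j$-th layer. I would order the layers so that the closed fixed-point stratum $[\gg,\gg]^\perp$ appears as the final quotient, i.e. $\Jc_n/\Jc_{n-1}\simeq\Cc_0(\RR^d)$ with $\dim\Hc_n=1$, while for $j=1,\dots,n-1$ the layer consists of positive-dimensional orbits, so $\dim\Hc_j=\infty$; each such ideal is then continuous-trace with all irreducible representations infinite-dimensional.

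To invoke the lemmas I must certify that every $\Gamma_j$ has finite covering dimension, and this is exactly where Lemma~\ref{dim} enters: each $\Gamma_j$ is homeomorphic to a locally closed subset of the finite-dimensional metric space $\gg^*\cong\RR^{\dim\gg}$ (a cross-section to the orbits inside the $j$-th layer), whence $\dim\Gamma_j\le\dim\gg<\infty$. Consequently Lemma~\ref{RR1} gives $\RRa(\Jc_j/\Jc_{j-1})\le1$ for $j=1,\dots,n-1$, while \cite[Prop.~1.1]{BrPe91} gives $\RRa(\Jc_n/\Jc_{n-1})=\RRa(\Cc_0(\RR^d))=d$, since the covering dimension of $\RR^d$ is $d$. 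Peeling off the continuous-trace infinite-dimensional ideals one at a time by Lemma~\ref{RR4} (equivalently, applying Proposition~\ref{RR5}) then yields
$$\RRa(C^*(G))=\max\{\RRa(\Jc_j/\Jc_{j-1})\mid j=1,\dots,n\}=\max\{1,d\}=d,$$
the last equality because a nonzero solvable $\gg$ satisfies $[\gg,\gg]\subsetneq\gg$, hence $d\ge1$; the degenerate case $\gg=0$ (so $G$ trivial, $C^*(G)=\CC$, $d=0$) is immediate.

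The main obstacle I anticipate lies not in this final bookkeeping but in the extraction and verification of the composition series above. One must guarantee that the stratification of \cite{Cu92} can be ordered so that the infinite-dimensional continuous-trace pieces become successive ideals (peelable by Lemma~\ref{RR4}) with the one-dimensional fixed-point layer surviving as the top quotient, and one must check that each layer's orbit space is genuinely a locally closed subset of a finite-dimensional metric space, so that Lemma~\ref{dim} applies and the finite-spectrum hypotheses of Lemmas~\ref{RR1}--\ref{RR4} are satisfied. Securing the Hausdorffness and the continuous-trace property of the subquotients, together with the identification of the fixed-point stratum as the \emph{unique} layer with finite-dimensional irreducible representations, is the technical heart of the argument and the part where the exponentiality of $G$ (via \cite{Cu92}) is indispensable.
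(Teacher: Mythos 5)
Your proposal follows essentially the same route as the paper: it invokes the composition series of \cite[Cor.~3.2]{Cu92} with the abelianized quotient $\Cc_0([\gg,\gg]^\perp)$ on top, uses Lemma~\ref{dim} to verify the finite-dimensionality of the spectra of the intermediate continuous-trace subquotients, applies Lemma~\ref{RR1} to bound their real rank by $1$, computes the real rank of the top quotient via \cite[Prop.~1.1]{BrPe91} (the paper does this slightly more carefully by passing to the one-point compactification $S^r$, since that proposition concerns compact spaces), and assembles everything with Proposition~\ref{RR5}. The "main obstacle" you flag at the end is precisely what the citation of \cite{Cu92} supplies, so there is no gap.
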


\begin{proof}
Denoting $\Ac:=C^*(G)$, it follows by \cite[Cor. 3.2]{Cu92} that 
there exists a family of closed two-sided ideals 
$\{0\}=\Jc_0\subseteq\Jc_1\subseteq\cdots\subseteq\Jc_n=\Ac$
where $\Jc_j/\Jc_{j-1}$ is a separable continuous trace $C^*$-algebra 
whose irreducible representations are infinite-dimensional 
and for which $\Sigma_j:=\widehat{\Jc_j/\Jc_{j-1}}$ 
is homeomorphic to a semi-algebraic subset of $\gg^*$ for $j=1,\dots,n-1$. 
Moreover $\Ac/\Jc_{n-1}\simeq\Cc_0([\gg,\gg]^\perp)$, 
hence $\Sigma_n:=\widehat{\Ac/\Jc_{n-1}}$ is homeomorphic to the vector space 
$[\gg,\gg]^\perp\simeq (\gg/[\gg,\gg])^*\simeq \gg/[\gg,\gg]$. 
Then $\Sigma_j$ is a locally closed subset of $\gg^*$,  
hence it follows by Lemma~\ref{dim}
that $\dim\Sigma_j<\infty$.  
Thus by Lemma~\ref{RR1} we obtain $\RRa(\Jc_j/\Jc_{j-1})\le 1$ for $j=1,\dots,n-1$. 

On the other hand, denoting $r:=\dim(\gg/[\gg,\gg])$, 
it follows that the one-point compactification of $\Sigma_n$ is homeomorphic to the $r$-dimensional sphere $S^r$, 
hence the unitization of the $C^*$-algebra $\Ac/\Jc_{n-1}\simeq\Cc_0(\Sigma_n)$ 
is $*$-isomorphic to $\Cc(S^r)$.  
Using \cite[Prop. 1.1]{BrPe91}, we then obtain $\RRa(\Ac/\Jc_{n-1})=r=\dim(\gg/[\gg,\gg])$. 
Now, as an application of Proposition~\ref{RR5}, we obtain $\RRa(C^*(G))=\max\{r,1\}=r$, 
and this completes the proof. 
\end{proof}

\begin{corollary}\label{cor_exp}
Let $G$ be any connected Lie group with its Lie algebra $\gg$. 
If the universal covering group of $G$ is an exponential Lie group, 
then $\RRa(C^*(G))\le\dim(\gg/[\gg,\gg])$. 
\end{corollary}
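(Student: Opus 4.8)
The plan is to exhibit $C^*(G)$ as a quotient of $C^*(\widetilde G)$, where $\widetilde G$ denotes the universal covering group, and then to combine Theorem~\ref{th_exp} with the fact that the real rank does not increase under quotients.

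First I would record the structure of the covering. Writing $q\colon\widetilde G\to G$ for the universal covering homomorphism, its kernel $\Gamma:=\ker q$ is a discrete central subgroup, and $q$ induces an isomorphism $G\simeq\widetilde G/\Gamma$. Since $q$ is a local diffeomorphism, the Lie algebra of $\widetilde G$ is again $\gg$; in particular the number $\dim(\gg/[\gg,\gg])$ is the same whether computed for $G$ or for $\widetilde G$. As $\widetilde G$ is exponential by hypothesis, Theorem~\ref{th_exp} applies to it and gives $\RRa(C^*(\widetilde G))=\dim(\gg/[\gg,\gg])$.

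Next I would produce the quotient map of $C^*$-algebras. By functoriality of the full group $C^*$-algebra, the surjective continuous homomorphism $q$ induces a $*$-homomorphism $q_*\colon C^*(\widetilde G)\to C^*(G)$; on the dense subalgebra of $L^1$-functions it is given by summation over the discrete fibres, $(q_*f)(g)=\sum_{\gamma\in\Gamma}f(\widetilde g\,\gamma)$ for any lift $\widetilde g$ of $g$. Because the unitary representations of $G\simeq\widetilde G/\Gamma$ are precisely those unitary representations of $\widetilde G$ that are trivial on $\Gamma$, the map $q_*$ is surjective and identifies $C^*(G)$ with the quotient of $C^*(\widetilde G)$ by the closed two-sided ideal $\bigcap_\rho\ker(\rho\circ q)$, the intersection running over the representations $\rho$ of $G$.

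Finally I would invoke the monotonicity of the real rank under quotients: if $\Jc$ is a closed two-sided ideal of a $C^*$-algebra $\Ac$, then $\RRa(\Ac/\Jc)\le\RRa(\Ac)$. This is an elementary consequence of the definition (cf.\ \cite{BrPe91}): any self-adjoint tuple in $\Ac/\Jc$ lifts to a self-adjoint tuple in $\Ac$, which can be perturbed into $\Lc_{n+1}(\Ac)$ when $n=\RRa(\Ac)$, and the images of these perturbed elements still lie in $\Lc_{n+1}(\Ac/\Jc)$, since a unital $*$-homomorphism carries invertible elements to invertible elements. Applying this to $\Ac=C^*(\widetilde G)$ and the ideal identified above yields $\RRa(C^*(G))\le\RRa(C^*(\widetilde G))=\dim(\gg/[\gg,\gg])$, which is the desired inequality. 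The one step requiring genuine care is the identification of $C^*(G)$ as a quotient of $C^*(\widetilde G)$; once that is in place, the remaining steps are formal.
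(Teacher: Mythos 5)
Your proof is correct and follows essentially the same route as the paper: realize $C^*(G)$ as a quotient of $C^*(\widetilde G)$ via the covering map, apply Theorem~\ref{th_exp} to the exponential group $\widetilde G$, and use that the real rank does not increase under passing to quotients (the paper cites \cite[Th.~1.4]{El95} for this last step and invokes amenability to set up the exact sequence, whereas you argue both points directly, but the substance is the same).
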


\begin{proof}
Let $p\colon \widetilde{G}\to G$ be the universal covering map of $G$, 
so that we have a short exact sequence of Lie groups 
$$\1\to N\to\widetilde{G}\to G\to\1$$
where $N:=\Ker p$ is a discrete subgroup of the center of the exponential Lie group $\widetilde{G}$. 
Then all the groups involved in the above short exact sequence are amenable even as discrete groups, 
and we then obtain a short exact sequence of $C^*$-algebras 
$$0\to \Jc\to C^*(\widetilde{G})\to C^*(G)\to 0$$
for the ideal $\widehat{\Jc}=\{[\pi]\in\widehat{C^*(\widetilde{G})}\simeq\widehat{\widetilde{G}}\mid 
N\not\subset\Ker\pi\}$ of $C^*(\widetilde{G})$. 
We then obtain 
$$\RRa(C^*(G))=\RRa(C^*(\widetilde{G})/\Jc)\le \RRa(C^*(\widetilde{G}))=\dim(\gg/[\gg,\gg]).$$
The above inequality follows by \cite[Th. 1.4]{El95} (as in the proof of Lemma~\ref{RR4} above) 
and the final equality follows by Theorem~\ref{th_exp} applied for $\widetilde{G}$, 
taking also into account that the Lie algebra of $\widetilde{G}$ is isomorphic to~$\gg$. 
This concludes the proof. 
\end{proof}

\begin{remark}\label{rem_exp} 
\normalfont
The inequality in Corollary~\ref{cor_exp} can be strict if $G$ is not simply connected. 
For instance, if $G=\TT:=\RR/\ZZ$, then $C^*(G)=c_0(\NN)$ has real rank zero 
by \cite[Th. 2.6((i)$\Leftrightarrow$(ii))]{BrPe91}, 
so $\RRa(C^*(G))=0<1=\dim(\gg/[\gg,\gg])$.  
\end{remark}

\subsubsection*{Some remarks on abelianization}

Let $\Ac$ be any  $C^*$-algebra and denote by $\Jc(\Ac)$ its closed two-sided ideal generated by 
its subset of commutators $\spa\{[a,b]\mid a,b\in\Ac\}$. 
Then $\Ac/\Jc(\Ac)$ is a commutative $C^*$-algebra, 
hence there exists a locally compact space $\Gamma_{\Ac}$ 
(uniquely determined by $\Ac$ up to a homeomorphism) and  a $*$-isomorphism $\Ac/\Jc(\Ac)\simeq\Cc_0(\Gamma_{\Ac})$, 
and thus we obtain the short exact sequence 
\begin{equation}\label{ab0_eq1}
0\to\Jc(\Ac)\to\Ac\to\Cc_0(\Gamma_{\Ac})\to0.
\end{equation}
A natural question is to estimate the real rank of $\Ac$  in terms of the covering dimension $\dim(\Gamma_{\Ac}^\ast)$, 
where $\Gamma_\Ac^\ast$ denotes the one-point compactification of $\Gamma_\Ac$,  
We always have that  $\RRa(\Ac)\ge \dim(\Gamma_{\Ac}^\ast)$, by \eqref{ab0_eq1} and \cite[Th. 1.4]{El95}. 
The interesting equality $\RRa(\Ac)=\dim(\Gamma_{\Ac}^\ast)$ holds for several $C^*$-algebras: 
\begin{itemize}
\item If $\Ac=C^*(G)$ for an exponential Lie group $G$, 
then $\RRa(\Ac)=\dim(\Gamma_{\Ac})= \dim(\Gamma_{\Ac}^\ast)$ by Theorem~\ref{th_exp} 
and the short exact sequence in the proof of Theorem~\ref{th_pr}, which shows that $\Gamma_{\Ac}=[\gg,\gg]^\perp$. 
\item If $\Ac$ is any AF-algebra, then $\RRa(\Ac)=0=\dim(\Gamma_{\Ac}^\ast)$. 
In fact, this follows since it is well known that 
the quotients of AF-algebras are AF-algebras, and a commutative $C^*$-algebra is an AF-algebra 
if and only if its spectrum is totally disconnected, hence its covering dimension is equal to zero. 
\item If $\Ac$ is the $C^*$-algebra generated by the Toeplitz operators with continuous symbols on the unit circle $\TT$
(equivalently, $\Ac$ is the $C^*$-algebra generated by the unilateral shift operator), 
then one has $\Jc(\Ac)=\Kc(L^2(\TT))$ and $\Gamma_{\Ac}=\TT$, and it is known from \cite[Cor.~1.13(i)]{El95} that $\RRa(\Ac)=1=\dim(\TT)$. 
\end{itemize}
It is clear that the equality $\RRa(\Ac)=\dim(\Gamma_{\Ac}^\ast)$ fails to be true in general. 
For instance, if $\Ac$ is a simple $C^*$-algebra, then $\Jc(\Ac)=\Ac$, hence $\Gamma_{\Ac}=\emptyset$, 
and then $\dim(\Gamma_{\Ac}^\ast)=0$; on the other hand, examples of simple $C^*$-algebras are known, 
having positive real rank (see \cite[Th. 10]{Vi99}). 
Nevertheless, if $\Ac$ is a $C^*$-algebra of real rank zero and with  $\Jc(\Ac)\ne 0$, we have that 
$\dim (\Gamma_\Ac^\ast) =0$.

\section{On projections in the $C^*$-algebras of exponential Lie groups}

The main result of this section provide a kind of estimates on the size of the closed two-sided 
ideal generated by the projections in the $C^*$-algebra of an exponential Lie group $G$. 
That ideal is strictly smaller than $C^*(G)$, as already noted in Remark~\ref{rem2.2}.

\begin{notation}
\normalfont
For any $C^*$-algebra $\Ac$ we denote $\Gr(\Ac):=\{p\in\Ac\mid p=p^2=p^*\}$. 
\end{notation}

\begin{lemma}\label{pr1}
Let $\Ac$ be any $C^*$-algebra whose spectrum $\widehat{\Ac}$ is a Hausdorff space. 
If no connected component of $\widehat{\Ac}$ is compact, 
then $\Gr(\Ac)=\{0\}$. 
\end{lemma}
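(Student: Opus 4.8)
The plan is to argue by contraposition: assuming that $\Ac$ contains a nonzero projection $p$, I will produce a compact connected component of $\widehat{\Ac}$. The central object is the \emph{support} of $p$, namely the set $U:=\{[\pi]\in\widehat{\Ac}\mid \pi(p)\neq0\}$. Since $p$ is a projection, $\pi(p)$ is a projection for every irreducible representation $\pi$, so $\|\pi(p)\|\in\{0,1\}$ and hence $U=\{[\pi]\mid \|\pi(p)\|\ge\tfrac12\}$. The whole argument reduces to showing that $U$ is a nonempty, compact, open subset of $\widehat{\Ac}$.

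First I would check nonemptiness and openness. Nonemptiness is immediate: if $\pi(p)=0$ for every $[\pi]\in\widehat{\Ac}$, then $p\in\bigcap_\pi\Ker\pi=\{0\}$, contradicting $p\neq0$. Openness follows from the standard lower semicontinuity of the norm function $[\pi]\mapsto\|\pi(p)\|$ on the spectrum, which exhibits $U=\{[\pi]\mid \|\pi(p)\|>\tfrac12\}$ as an open set.

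The key step, and the point I expect to be the main obstacle, is the compactness of $U$. For this I would invoke the classical fact (Dixmier) that for any element $a$ of a $C^*$-algebra and any $t>0$ the set $\{[\pi]\in\widehat{\Ac}\mid \|\pi(a)\|\ge t\}$ is quasi-compact; here one uses that $\|\pi(a)\|$ depends only on $\Ker\pi$, so that under the Hausdorff hypothesis the canonical map $\widehat{\Ac}\to\mathrm{Prim}(\Ac)$ is a homeomorphism and the quasi-compactness transfers to $\widehat{\Ac}$. Applying this with $a=p$ and $t=\tfrac12$ shows that $U$ is quasi-compact; since $\widehat{\Ac}$ is Hausdorff, $U$ is genuinely compact and, being compact in a Hausdorff space, also \emph{closed}. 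Thus the Hausdorff hypothesis upgrades quasi-compactness to the conclusion that $U$ is a nonempty compact clopen subset essentially for free.

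Finally I would extract the component. Choosing any $[\pi_0]\in U$, let $C$ be its connected component in $\widehat{\Ac}$. Because $U$ is clopen and a connected set meeting a clopen set is contained in it, $C\subseteq U$; and since $C$ is closed in $\widehat{\Ac}$ while $U$ is compact, $C$ is a closed subset of a compact set, hence compact. This yields a compact connected component of $\widehat{\Ac}$, contradicting the hypothesis. Therefore no nonzero projection exists, i.e.\ $\Gr(\Ac)=\{0\}$. Apart from the compactness of $U$, every remaining step is routine topological bookkeeping.
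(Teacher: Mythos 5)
Your proof is correct. It rests on the same underlying facts from Dixmier as the paper's own proof --- the (semi)continuity of $[\pi]\mapsto\Vert\pi(p)\Vert$ and the quasi-compactness of the superlevel sets $\{[\pi]\mid\Vert\pi(p)\Vert\ge t\}$ for $t>0$ --- but it is organized contrapositively. The paper fixes a non-compact connected component $\Gamma$ and observes that the continuous, $\{0,1\}$-valued function $[\pi]\mapsto\Vert\pi(p)\Vert$ vanishes at infinity on $\Gamma$, hence is identically $0$ there, so that $\pi(p)=0$ for every $[\pi]$ and therefore $p=0$. You instead package the support $U=\{[\pi]\mid\pi(p)\ne0\}$ of a putative nonzero projection as a nonempty compact clopen subset of $\widehat{\Ac}$ and extract from it a compact connected component. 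The two arguments are interchangeable; yours has the mild advantage of isolating exactly where the Hausdorff hypothesis enters (it upgrades the quasi-compact set $U$ to a closed, hence clopen, set, without which the component argument would fail), while the paper's version is shorter because it quotes continuity and vanishing at infinity directly from Dixmier, 3.3.7--3.3.9. All the auxiliary steps you flag as routine (nonemptiness of $U$ via faithfulness of the atomic representation, openness via lower semicontinuity, a connected set meeting a clopen set being contained in it, closedness of components) are indeed correct as stated.
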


\begin{proof}
Let $p\in\Gr(\Ac)$ and $\Gamma$ be any connected component of $\widehat{\Ac}$. 
Since $\Gamma$ is not compact, it follows by \cite[3.3.7--9]{Dix64} that 
the function $\Gamma\to[0,\infty)$, $[\pi]\mapsto\Vert\pi(p)\Vert$, is continuous and 
$\lim\limits_{\Gamma\ni[\pi]\to\infty}\Vert\pi(p)\Vert =0$. 
But $\pi(p)\in\Bc(\Hc_\pi)$ is an orthogonal projection, hence $\Vert\pi(p)\Vert\in\{0,1\}$ 
for all $[\pi]\in\Gamma$. 
This implies that $\pi(p)=0$ for all $[\pi]\in\widehat{\Ac}$, 
and then by \cite[2.7.3]{Dix64} we obtain $p=0$, which completes the proof. 
\end{proof}

\begin{lemma}\label{pr2}
For any short exact sequence of $C^*$-algebras 
$$0\to\Jc\to\Ac\mathop{\to}\limits^Q\Ac/\Jc\to0$$ 
if $\Gr(\Ac/\Jc)=\{0\}$, then $\Gr(\Jc)=\Gr(\Ac)$. 
\end{lemma}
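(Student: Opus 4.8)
The plan is to prove the set equality $\Gr(\Jc)=\Gr(\Ac)$ by establishing the two inclusions separately, where the harder direction exploits the hypothesis $\Gr(\Ac/\Jc)=\{0\}$ together with the fact that $\Jc$ is a closed two-sided ideal. First I would dispose of the trivial inclusion $\Gr(\Jc)\subseteq\Gr(\Ac)$: since $\Jc\subseteq\Ac$ as a $C^*$-subalgebra, any element $p\in\Jc$ satisfying $p=p^2=p^*$ continues to satisfy the same relations inside $\Ac$, so it is automatically a projection in $\Ac$. No compactness or quotient hypothesis is needed here.

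For the reverse inclusion $\Gr(\Ac)\subseteq\Gr(\Jc)$, I would take an arbitrary $p\in\Gr(\Ac)$ and apply the quotient map $Q$. Because $Q$ is a $*$-homomorphism, the element $Q(p)\in\Ac/\Jc$ again satisfies $Q(p)=Q(p)^2=Q(p)^*$, so $Q(p)\in\Gr(\Ac/\Jc)$. The hypothesis $\Gr(\Ac/\Jc)=\{0\}$ then forces $Q(p)=0$, which means precisely that $p\in\Ker Q=\Jc$. Since $p$ is already known to be a projection in $\Ac$, and $p\in\Jc$, it is a projection in $\Jc$, i.e.\ $p\in\Gr(\Jc)$. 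Combining the two inclusions yields the claimed equality.

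The only point that requires a little care — and what I expect to be the main (though mild) obstacle — is the identification $\Ker Q=\Jc$, which is the exactness of the sequence at $\Ac$; this is built into the definition of a short exact sequence of $C^*$-algebras, so it is available without further argument. The argument is otherwise entirely formal: it uses only that $*$-homomorphisms preserve the algebraic relations defining a projection and that an idempotent self-adjoint element lying in a subalgebra is a projection there. In particular, unlike Lemma~\ref{pr1}, this lemma needs no topological input on the spectrum; the spectral hypotheses enter only when one later \emph{verifies} that a given quotient has $\Gr(\Ac/\Jc)=\{0\}$, for which Lemma~\ref{pr1} would be invoked.
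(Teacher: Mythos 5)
Your proposal is correct and is essentially identical to the paper's proof: the inclusion $\Gr(\Jc)\subseteq\Gr(\Ac)$ is noted as trivial, and for the converse one applies $Q$ to a projection $p\in\Gr(\Ac)$, uses the hypothesis to conclude $Q(p)=0$, hence $p\in\Jc$ and $p\in\Gr(\Jc)$. No differences worth noting.
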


\begin{proof}
Since $\Jc\subseteq\Ac$, we have $\Gr(\Jc)\subseteq\Gr(\Ac)$. 
For the opposite inclusion,  
if $p\in\Gr(\Ac)$ then $Q(p)\in \Gr(\Ac/\Jc)=\{0\}$, 
hence $p\in\Jc$, and then $p\in\Gr(\Jc)$.  
\end{proof}

\begin{lemma}\label{pr3}
Let $\Ac$ be any $C^*$-algebra with a family of closed two-sided ideals 
$\Jc_1\subseteq\cdots\subseteq\Jc_n=\Ac$
where $\Jc_j/\Jc_{j-1}$ is a $C^*$-algebra 
whose spectrum $\widehat{\Jc_j/\Jc_{j-1}}$ is a Hausdorff space and such that 
no connected component of $\widehat{\Jc_j/\Jc_{j-1}}$ is compact for $j=2,\dots,n$. 
Then $\Gr(\Ac)=\Gr(\Jc_1)$.  
\end{lemma}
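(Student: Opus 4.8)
The plan is to reduce the whole statement to a repeated application of Lemmas~\ref{pr1} and~\ref{pr2} along the given chain of ideals. First I would note that for each index $j\in\{2,\dots,n\}$ the successive quotient $\Jc_j/\Jc_{j-1}$ meets exactly the hypotheses of Lemma~\ref{pr1}: its spectrum $\widehat{\Jc_j/\Jc_{j-1}}$ is Hausdorff, and none of its connected components is compact. Hence Lemma~\ref{pr1} applies and yields $\Gr(\Jc_j/\Jc_{j-1})=\{0\}$ for every such $j$.

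Next I would feed this vanishing into Lemma~\ref{pr2}. For a fixed $j\in\{2,\dots,n\}$, consider the short exact sequence
$$0\to\Jc_{j-1}\to\Jc_j\to\Jc_j/\Jc_{j-1}\to0.$$
Since the quotient term has trivial projection set by the previous step, Lemma~\ref{pr2} applies to this sequence and gives $\Gr(\Jc_{j-1})=\Gr(\Jc_j)$.

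Finally I would chain these equalities together:
$$\Gr(\Jc_1)=\Gr(\Jc_2)=\cdots=\Gr(\Jc_n)=\Gr(\Ac),$$
which is precisely the asserted identity. Equivalently, the last two steps can be organized as an induction on $n$, the case $n=1$ being trivial and the inductive step amounting to the single application of Lemma~\ref{pr2} to the top sequence $0\to\Jc_{n-1}\to\Jc_n\to\Jc_n/\Jc_{n-1}\to0$, together with the induction hypothesis $\Gr(\Jc_1)=\Gr(\Jc_{n-1})$.

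The only point demanding any attention — and the step I would single out as the main obstacle — is the verification of the hypotheses of Lemma~\ref{pr1} for the successive quotients $\Jc_j/\Jc_{j-1}$; but here the Hausdorffness and the absence of compact connected components are handed to us directly in the statement, so the argument is essentially bookkeeping once Lemmas~\ref{pr1} and~\ref{pr2} are available. Notably, no property of $\Jc_1$ itself is required, which is exactly why $\Jc_1$ persists as the common value of all the projection sets in the chain.
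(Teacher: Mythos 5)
Your proposal is correct and follows essentially the same route as the paper: apply Lemma~\ref{pr1} to each quotient $\Jc_j/\Jc_{j-1}$ to get $\Gr(\Jc_j/\Jc_{j-1})=\{0\}$, then Lemma~\ref{pr2} to the short exact sequences $0\to\Jc_{j-1}\to\Jc_j\to\Jc_j/\Jc_{j-1}\to0$ to propagate $\Gr(\Jc_j)=\Gr(\Jc_{j-1})$ down the chain. If anything, your version is slightly tidier, since you run the induction only over $j=2,\dots,n$, exactly where the hypotheses are stated, whereas the paper's write-up nominally ranges over $j=1,\dots,n$ and invokes a $\Jc_0$ not present in the statement.
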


\begin{proof}
For $j=1,\dots,n$, one has the short exact sequence of $C^*$-algebras 
$$0\to\Jc_{j-1}\to\Jc_j\to\Jc_j/\Jc_{j-1}\to0.$$
Since $\widehat{\Jc_j/\Jc_{j-1}}$ is a Hausdorff space and 
no connected component of $\widehat{\Jc_j/\Jc_{j-1}}$ is compact, 
it follows by Lemma~\ref{pr3} that $\Gr(\Jc_j/\Jc_{j-1})=\{0\}$. 
Then, an application of Lemma~\ref{pr2} for the above exact sequence 
shows that $\Gr(\Jc_j)=\Gr(\Jc_{j-1})$ for $j=1,\dots,n$. 
But $\Jc_0=\{0\}$, hence $\Gr(\Jc_0)=\{0\}$. 
It then follows that 
$$\Gr(\Ac)=\Gr(\Jc_n)=\cdots=\Gr(\Jc_1),$$ 
and this completes the proof. 
\end{proof}

\begin{theorem}\label{th_pr}
Let $G$ be any exponential Lie group with its $C^*$-algebra $\Ac:=C^*(G)$. 
We denote by $\Jc_0\subset\Ac$ the intersection of kernels of all characters of $G$ 
extended to 1-dimensional $*$-representations of $\Ac$
Then $\Gr(\Ac)=\Gr(\Jc_0)$, and moreover $\Jc_0\subsetneqq\Ac$ if $\dim G>0$.  
\end{theorem}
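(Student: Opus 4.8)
The goal is to establish the identity $\Gr(\Ac)=\Gr(\Jc_0)$ and the strict inclusion $\Jc_0\subsetneqq\Ac$ when $\dim G>0$. The strategy is to apply the machinery developed in Lemmas~\ref{pr1}--\ref{pr3}, using the structure of $C^*(G)$ provided by the coadjoint orbit analysis of \cite{Cu92}, exactly as in the proof of Theorem~\ref{th_exp}. First I would recall from \cite[Cor. 3.2]{Cu92} that there is a family of closed two-sided ideals
$$\{0\}=\Jc_0'\subseteq\Jc_1\subseteq\cdots\subseteq\Jc_n=\Ac$$
with each subquotient $\Jc_j/\Jc_{j-1}$ a separable continuous trace $C^*$-algebra whose spectrum $\Sigma_j$ is homeomorphic to a locally closed (semi-algebraic) subset of $\gg^*$, and with $\Ac/\Jc_{n-1}\simeq\Cc_0([\gg,\gg]^\perp)$. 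The key point to identify is that the ideal $\Jc_0$ of the statement — the intersection of the kernels of all one-dimensional representations — coincides with $\Jc_{n-1}$, the kernel of the quotient map onto $\Ac/\Jc_{n-1}\simeq\Cc_0([\gg,\gg]^\perp)$. Indeed, the characters of $G$ correspond precisely to the one-dimensional orbits, which parametrize $\Sigma_n\simeq[\gg,\gg]^\perp$, so their common kernel is exactly $\Jc_{n-1}$.

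Next I would verify the hypotheses of Lemma~\ref{pr3} for the filtration terminating at $\Ac$. For each $j$, the spectrum $\Sigma_j$ is Hausdorff (continuous trace algebras have Hausdorff spectrum), so the first hypothesis of Lemma~\ref{pr3} is met. The essential verification is that no connected component of $\Sigma_j$ is compact: since each $\Sigma_j$ is homeomorphic to a locally closed subset of the vector space $\gg^*$, and in particular $\Sigma_n\simeq[\gg,\gg]^\perp$ is a nonzero vector space when $\dim G>0$, one expects each relevant $\Sigma_j$ to have no compact connected component. This is where I would invoke the semi-algebraic structure: a semi-algebraic subset of a Euclidean space that had a compact connected component would contain a compact piece, and the fibration/orbit structure underlying the Pukanszky-type stratification prevents this. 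Granting this, Lemma~\ref{pr3} applied to the filtration $\Jc_{n-1}\subseteq\Ac$ (treating $\Jc_{n-1}$ as the bottom term) yields $\Gr(\Ac)=\Gr(\Jc_{n-1})=\Gr(\Jc_0)$.

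Finally, for the strict inclusion $\Jc_0\subsetneqq\Ac$, I would argue that $\Ac/\Jc_0\simeq\Cc_0([\gg,\gg]^\perp)$ is nonzero precisely when $[\gg,\gg]^\perp\neq\{0\}$. Since $G$ is solvable (being exponential), $[\gg,\gg]\subsetneqq\gg$ whenever $\gg\neq\{0\}$, i.e.\ whenever $\dim G>0$; hence $\dim(\gg/[\gg,\gg])\ge 1$ and $[\gg,\gg]^\perp$ is a nonzero vector space, so $\Cc_0([\gg,\gg]^\perp)\neq\{0\}$ and therefore $\Jc_0\neq\Ac$.

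\textbf{Main obstacle.} The delicate step is the compactness claim in the middle paragraph: showing that no connected component of any $\Sigma_j$ is compact. For the top quotient this is transparent since $\Sigma_n$ is a genuine vector space, but for the intermediate subquotients one must genuinely use that the strata arising from \cite{Cu92} are locally closed semi-algebraic subsets of $\gg^*$ invariant under a natural scaling or affine action, so that any connected component is unbounded (a nonempty semi-algebraic cone-like or affine-fibered stratum cannot be compact). Pinning down this non-compactness uniformly across all strata — rather than only for the abelianization quotient — is the part of the argument that requires care and is the likely source of any gap.
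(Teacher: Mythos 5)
Your proposal is correct and follows essentially the same route as the paper: identify $\Jc_0$ with the ideal $\Jc_{n-1}$ of the Currey filtration so that $\Ac/\Jc_0\simeq\Cc_0([\gg,\gg]^\perp)$, apply Lemma~\ref{pr3}, and deduce $\Jc_0\subsetneqq\Ac$ from solvability of $\gg$. The ``main obstacle'' you flag is not actually an obstacle: since you invoke Lemma~\ref{pr3} with $\Jc_0=\Jc_{n-1}$ as the \emph{bottom} term of a two-term filtration, the Hausdorff/non-compactness hypotheses are required only for the single quotient $\Ac/\Jc_{n-1}\simeq\Cc_0([\gg,\gg]^\perp)$, whose spectrum is a positive-dimensional vector space, so the intermediate strata $\Sigma_1,\dots,\Sigma_{n-1}$ never enter the argument --- and indeed the paper's proof uses only the short exact sequence $0\to\Jc_0\to\Ac\to\Cc_0([\gg,\gg]^\perp)\to0$.
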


\begin{proof}
It follows by the method of coadjoint orbits that we have a short exact sequence 
$$0\to\Jc_0\to\Ac\to\Cc_0([\gg,\gg]^\perp)\to0$$
where $[\gg,\gg]^\perp=\{\chi\in\gg^*\mid [\gg,\gg]\subseteq\Ker\chi\}$ 
is the space of characters of the Lie algebra~$\gg$. 
The vector space $[\gg,\gg]^\perp$ has no compact connected components, 
hence we may use Lemma~\ref{pr3} to obtain  $\Gr(\Ac)=\Gr(\Jc_0)$. 

Finally, since $\gg$ is a solvable Lie algebra, it follows that $[\gg,\gg]\subsetneqq\gg$ 
if $\dim\gg>0$, hence in this case $[\gg,\gg]^\perp\ne\{0\}$, and the above short exact sequence 
implies $\Jc_0\subsetneqq\Ac$, which concludes the proof. 
\end{proof}

The above Theorem~\ref{th_pr} shows that nontrivial projectors appear as soon as the group 
has open coadjoint orbits. 
The next proposition shows that there could be only a finite number of such orbits, and give a
a description of them.

\begin{proposition}\label{open}
Let $G$ be any connected Lie group with its Lie algebra $\gg$ and the duality pairing 
$\langle\cdot,\cdot\rangle\colon\gg^*\times\gg\to\RR$. 
For any basis $\{X_1,\dots,X_m\}$ in $\gg$ define the polynomial function
$$P\colon\gg^*\to\RR,\quad P(\xi):=\det(\langle\xi,[X_j,X_k]\rangle)_{1\le j,k\le m}.$$
Then the following assertions hold: 
\begin{enumerate}[(i)]
\item\label{open_item1} 
If $\xi\in\gg^*$, then the coadjoint orbit $\Oc_\xi:=\Ad^*_G(G)\xi$ 
is an open subset of $\gg^*$ if and only if $P(\xi)\ne0$. 
\item\label{open_item2} 
The set of open coadjoint orbits of $G$ is finite and their union is a Zarisky open subset of $\gg^*$ which may be empty. 
\end{enumerate}
\end{proposition}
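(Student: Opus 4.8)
The plan is to reduce both assertions to elementary linear algebra applied to the Kirillov bilinear form attached to $\xi$. For each $\xi\in\gg^*$ define the alternating bilinear form $B_\xi\colon\gg\times\gg\to\RR$ by $B_\xi(X,Y):=\langle\xi,[X,Y]\rangle$; its Gram matrix in the fixed basis $\{X_1,\dots,X_m\}$ is exactly $(\langle\xi,[X_j,X_k]\rangle)_{1\le j,k\le m}$, so that $P(\xi)=\det B_\xi$. The coadjoint orbit $\Oc_\xi$ is the $G$-orbit through $\xi$ for the action $g\mapsto\Ad^*_G(g)$, and differentiating the orbit map $\alpha\colon G\to\gg^*$, $\alpha(g):=\Ad^*_G(g)\xi$, at the identity yields the linear map $\phi_\xi\colon\gg\to\gg^*$ determined by $\langle\phi_\xi(X),Y\rangle=-\langle\xi,[X,Y]\rangle$ for all $Y\in\gg$. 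Hence $\Ran\phi_\xi=T_\xi\Oc_\xi$, while $\Ker\phi_\xi$ is precisely the radical of $B_\xi$.

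For assertion~(i) I would argue as follows. If $B_\xi$ is nondegenerate, then $\phi_\xi$ has trivial kernel, hence is bijective (both spaces having dimension $m$), so $\alpha$ is a submersion at the identity; by the submersion theorem the image of $\alpha$ contains an open neighborhood of $\xi=\alpha(e)$, and since $G$ acts on $\gg^*$ by linear isomorphisms, the orbit is open around each of its points, hence open. Conversely, if $\Oc_\xi$ is open then $T_\xi\Oc_\xi=\gg^*$, so $\phi_\xi$ is surjective and $B_\xi$ is nondegenerate. In either direction, nondegeneracy of $B_\xi$ is equivalent to $P(\xi)=\det B_\xi\ne0$, which yields assertion~(i).

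For assertion~(ii), set $U:=\{\xi\in\gg^*\mid P(\xi)\ne0\}$, a Zariski open subset of $\gg^*\cong\RR^m$. By assertion~(i), $U$ is exactly the union of all open coadjoint orbits. Each open orbit is connected, being the continuous image of the connected group $G$, and it is open in $\gg^*$, hence open in $U$; its complement in $U$ is the union of the remaining open orbits and is therefore open as well, so each open orbit is also closed in $U$. Consequently the open orbits are precisely the connected components of $U$. Since $U$ is semialgebraic, being the complement of the real hypersurface $\{P=0\}$, it has only finitely many connected components, so there are finitely many open orbits. Finally $U$ may be empty: the form $B_\xi$ is alternating, hence degenerate whenever $m$ is odd, so in that case $P\equiv0$, and likewise $P\equiv0$ when $\gg$ is abelian.

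The point deserving the most care is the implication ``full tangent space $\Rightarrow$ open orbit'' in assertion~(i): a priori an orbit of a smooth action is only an immersed submanifold, so one cannot merely invoke invariance of dimension. Applying the submersion theorem to $\alpha$ circumvents this by producing an open neighborhood of $\xi$ lying inside the orbit, after which homogeneity of the orbit under the linear $G$-action propagates openness to every point. The finiteness in assertion~(ii) rests only on the standard fact that semialgebraic sets have finitely many connected components, which I would cite rather than reprove.
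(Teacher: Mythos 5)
Your proposal is correct and follows essentially the same route as the paper: assertion~(i) is reduced to nondegeneracy of the form $B_\xi(X,Y)=\langle\xi,[X,Y]\rangle$ via the tangent space of the orbit, and assertion~(ii) identifies the open orbits with the (path) connected components of $\{P\ne0\}$ and invokes finiteness of the components of a real algebraic/semialgebraic set (the paper cites Delfs--Knebusch for this). Your extra care with the submersion theorem in the ``full tangent space $\Rightarrow$ open orbit'' step is a welcome refinement of a point the paper passes over quickly, but it does not change the argument.
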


\begin{proof}
For Assertion~\eqref{open_item1} 
denote $\gg(\xi):=\{X\in\gg\mid\langle\xi,[X,\cdot]\rangle=\{0\}\}$, the coadjoint isotropy subalgebra at $\xi$. 
Then the tangent space at $\xi\in\Oc_\xi$ can be computed as 
$T_\xi(\Oc_\xi)=\gg/\gg(\xi)$, hence $\Oc_\xi$ is an open subset of $\gg$ 
if and only if $\gg(\xi)=\{0\}$, and this is equivalent to the condition that the bilinear map 
$$\gg\times\gg\to\RR,\quad (X,Y)\mapsto B_\xi:=\langle\xi,[X,Y]\rangle $$
be nondegenerate. 
As $\{X_1,\dots,X_m\}$ is a basis in $\gg$, this condition is further equivalent to $P(\xi)\ne0$. 

For Assertion~\eqref{open_item2}, 
use Assertion~\eqref{open_item1} to see that 
the union of all open coadjoint orbits of $G$ is $P^{-1}(\RR\setminus\{0\})=\{\xi\in\gg^*\mid P(\xi)\ne0\}$, 
and this is a (maybe empty) Zarisky open subset of $\gg^*$ 
since $P\colon\gg^*\to\RR$ is a polynomial function. 

To see that the set of open coadjoint orbits of $G$ is finite, 
first note that every coadjoint orbit of $G$ is path connected since $G$ is path connected. 
Hence the open coadjoint orbits of $G$ can be equivalently described as the path connected 
components of the algebraic set $P^{-1}(\RR\setminus\{0\})$. 
Then we may use for instance \cite[Th. 4.1]{DK81}, 
which says in particular that the set of all path components of any algebraic variety over $\RR$ 
is finite, and this concludes the proof. 
\end{proof}

\begin{remark}
\normalfont
It follows by \cite[Th. 2.7 and Rem. 2.8]{Ko12} that the solvable Lie groups of type $HN$ 
that arise from Iwasawa decompositions of complex semisimple Lie groups 
(that is, Borel subgroups) 
may have at most one open coadjoint orbit, 
and such an open orbit exists if and only if $-1$ belongs to the corresponding Weyl group. 
\end{remark}

\begin{remark}[Theorem~\ref{th_pr} is sharp]\label{rem_pr} 
\normalfont
Let us resume the notation of Theorem~\ref{th_pr} and assume that 
the set $\Sigma_0$ of open coadjoint orbits of $G$ is nonempty. 
Then $\Sigma_0$ is a finite set and $\Jc_0$ contains a closed ideal $\Jc_{00}$ of $\Ac$ 
which is $*$-isomorphic to a direct sum of $\vert\Sigma_0\vert$ copies of the $C^*$-algebra $\Kc$ 
of compact operators on a separable infinite-dimensional complex Hilbert space, 
hence $\Gr(\Jc_{00})\ne\{0\}$. 

Moreover, the specific example of the $(ax+b)$-group shows that we may have $\Jc_{00}=\Jc_0$, 
hence in this case $\Gr(\Ac)=\Gr(\Jc_0)=\Gr(\Jc_{00})\ne\{0\}$. 
\end{remark}

\section{On the stable rank}

In this section we briefly indicate how the line of reasoning that leads to Theorem~\ref{th_exp} 
could be modified in order to compute the stable rank of $C^*(G)$ for any exponential Lie group $G$. 
This problem has been raised in \cite[Question 4.14]{Ri83}: 
If $G$ is a Lie group, how does one compute $\tsr(C^*(G))$ in terms of the
structure of $G$?
It was mentioned  already in \cite[Ex. 4.13]{Ri83} that if $G$ is the $(ax+b)$-group, then $\tsr(C^*(G))=2$.

\begin{remark}\label{tsr_prel}
\normalfont
For any compact space $X$ we have 
\begin{equation}\label{tsr_prel_eq1}
\tsr(\Cc(X))=1+[(\dim X)/2]
\end{equation} 
by \cite[Prop. 1.7]{Ri83}. 
For any $C^*$-algebra $\Ac$ and a separable infinite-dimensional complex Hilbert space~$\Hc$ we have 
\begin{equation}\label{tsr_prel_eq2}
\tsr(\Ac\otimes\Kc(\Hc)) 
\le 2
\end{equation}
by \cite[Th. 6.4]{Ri83}. 
Also, for any closed two-sided ideal $\Jc\subseteq\Ac$ we have 
\begin{equation}\label{tsr_prel_eq3}
\tsr(\Ac)\ge \max\{\tsr(\Jc),\tsr(\Ac/\Jc)\} 
\end{equation} 
by \cite[Th. 4.3--4.4]{Ri83}. 
  \end{remark}

\begin{remark}\label{tsr4}
\normalfont
In the setting of Lemma~\ref{RR4}, one has $\tsr(A)\le \max\{2, \tsr(A/J)\}$, 
by \cite[Th.~3.12(i)]{Br07}.  
\end{remark}

\begin{remark}\label{tsr5}
\normalfont Using Remark~\ref{tsr4} above, one can be prove by a similar method that, 
in the setting of Proposition~\ref{RR5}, one has 
$\tsr(A)\le \max\{2, \tsr(A/J_{n-1}) \}$.
 \end{remark}

\begin{theorem}\label{th_exp_tsr}
For every exponential Lie group $G$ with its Lie algebra $\gg$, 
if we denote $r:=\dim(\gg/[\gg,\gg])$, then  
$$\tsr(C^*(G))= 
\begin{cases}
1 &\text{ if and only if }G=\RR, \\
1+\max\{[r/2],1\} &\text{ otherwise}.
\end{cases}
$$ 
\end{theorem}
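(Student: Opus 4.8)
The plan is to transcribe the architecture of the proof of Theorem~\ref{th_exp} into the stable-rank setting, feeding in the inputs recorded in Remark~\ref{tsr_prel} and Remark~\ref{tsr5} in place of the real-rank results. Write $\Ac:=C^*(G)$ and $r:=\dim(\gg/[\gg,\gg])$, and keep the filtration $\{0\}=\Jc_0\subseteq\cdots\subseteq\Jc_n=\Ac$ supplied by \cite[Cor. 3.2]{Cu92}, whose top quotient is $\Ac/\Jc_{n-1}\simeq\Cc_0([\gg,\gg]^\perp)\simeq\Cc_0(\RR^r)$ and whose lower subquotients are continuous-trace with infinite-dimensional irreducible representations. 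For nontrivial $G$, solvability forces $[\gg,\gg]\subsetneq\gg$, so $r\ge1$ throughout.

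First I would treat the top quotient. Its one-point compactification is $S^r$, so the unitization of $\Cc_0(\RR^r)$ is $\Cc(S^r)$ and \eqref{tsr_prel_eq1} gives $\tsr(\Ac/\Jc_{n-1})=1+[r/2]$. The lower bound $\tsr(\Ac)\ge\tsr(\Ac/\Jc_{n-1})=1+[r/2]$ is then immediate from \eqref{tsr_prel_eq3}. For the upper bound, when $G$ is non-abelian (so $n\ge2$) Remark~\ref{tsr5} yields
$$\tsr(\Ac)\le\max\{2,\tsr(\Ac/\Jc_{n-1})\}=\max\{2,1+[r/2]\}=1+\max\{1,[r/2]\},$$
while when $G$ is abelian one has $\Ac=\Cc_0(\RR^r)$ and the value $1+[r/2]$ is exact. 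This proves the theorem whenever $r\ge2$ (both bounds equal $1+[r/2]=1+\max\{1,[r/2]\}$) and for $G=\RR$ (where $\Ac=\Cc_0(\RR)$ has $\tsr=1$); and since every other $G$ will be shown to have $\tsr(\Ac)\ge2$, this also yields the ``only if'' half of the stated equivalence.

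The one remaining case is $r=1$ with $G\neq\RR$, where the bounds above give only $1\le\tsr(\Ac)\le2$ and the real content is $\tsr(\Ac)\ge2$. This is the main obstacle, and it cannot be extracted from the stable ranks of the subquotients alone: already for the $(ax+b)$-group the ideal $\Jc_0\simeq\Kc\oplus\Kc$ and the quotient $\Cc_0(\RR)$ both have stable rank $1$, yet $\tsr(\Ac)=2$ by \cite[Ex. 4.13]{Ri83}. I would reduce the general case to such a minimal one. Here $\hg:=[\gg,\gg]$ has codimension $1$ and, being the derived algebra of a solvable Lie algebra, is nilpotent; for $X\in\gg\setminus\hg$ the identity $\hg=\ad_X(\hg)+[\hg,\hg]$ shows that $\ad_X$ acts invertibly on $V:=\hg/[\hg,\hg]\neq\{0\}$, and exponentiality of $G$ forces every eigenvalue of $\ad_X$ to have nonzero real part. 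Choosing an $\ad_X$-invariant subspace of $V$ of codimension $1$ or $2$ and pulling it back to a codimension-one subspace of $\hg$ containing $[\hg,\hg]$ produces an ideal $\mathfrak n\subseteq\gg$ for which $\gg/\mathfrak n$ is isomorphic either to the $(ax+b)$ Lie algebra (when $\ad_X$ has a real eigenvalue) or to a three-dimensional algebra $\RR\ltimes\RR^2$ on which $\ad_X$ acts with non-real eigenvalues of nonzero real part. In either case $H:=G/N$ is an exponential Lie group with $r_H=1$ and $C^*(H)$ is a quotient of $\Ac$, so by \eqref{tsr_prel_eq3} it suffices to establish $\tsr(C^*(H))=2$ for these two minimal families.

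For the $(ax+b)$ family this is exactly \cite[Ex. 4.13]{Ri83}. For the three-dimensional complex family I expect the same conclusion to require a direct analysis of the defining extension $0\to\Jc_0\to C^*(H)\to\Cc_0(\RR)\to0$, showing that it cannot be of stable rank one; the natural mechanism is to compute the index map $\partial\colon K_1(\Cc_0(\RR))\to K_0(\Jc_0)$ of the associated six-term sequence and check that it is nonzero, which rules out the liftings of unitaries that stable rank one would demand. Verifying this base case --- equivalently, isolating a clean criterion under which a nontrivial extension of $\Cc_0(\RR)$ by a stable continuous-trace algebra has stable rank at least two --- is the only point that goes genuinely beyond the formal scheme, and it is where the gaps in the literature noted in Remark~\ref{cor_exp_tsr} must be addressed.
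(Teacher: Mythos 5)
Your overall architecture (Currey's filtration, the sphere computation for the top quotient, \eqref{tsr_prel_eq3} for the lower bound and Remark~\ref{tsr5} for the upper bound) matches the paper's proof, and it correctly settles every case except $r=1$, $G\neq\RR$. For that remaining case the paper simply invokes \cite[Lemma 3.7]{SuTa97}, which states that $\tsr(C^*(G))=1$ if and only if $G=\RR$; combined with the sandwich $1\le\tsr(\Ac)\le 2$ this finishes the argument in one line. You do not use that lemma, and your substitute contains a genuine gap: after the (essentially correct) reduction to the two minimal quotients $\gg/\mathfrak n$, you establish $\tsr(C^*(H))=2$ only for the $(ax+b)$ family via \cite[Ex. 4.13]{Ri83}, while for the three-dimensional family $\RR\ltimes\RR^2$ with non-real eigenvalues you only state that you ``expect'' an index-map computation to work. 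That base case is not a routine variant of the $(ax+b)$ case: since $\dim\gg=3$ is odd, this group has no open coadjoint orbits, so the ideal $\Jc_0$ is not a direct sum of copies of $\Kc$ as it is for $(ax+b)$, and the structure of the extension $0\to\Jc_0\to C^*(H)\to\Cc_0(\RR)\to 0$ (and hence the relevant $K$-theory) is genuinely different. As written, the ``only if'' half of the theorem for groups with $r=1$ rests entirely on this unverified claim.

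Two smaller points. First, your reduction step should read ``codimension one or two'' for the pulled-back subspace of $\hg$ (it inherits the codimension of the chosen $\ad_X$-invariant subspace of $V$); the argument that the pullback is an ideal of $\gg$ is fine. Second, if you are willing to cite \cite[Lemma 3.7]{SuTa97} as the paper does, your entire reduction becomes unnecessary and the proof closes immediately; if you want to avoid that citation (e.g.\ because of the concerns about \cite{SuTa97} raised in Remark~\ref{cor_exp_tsr}, which however pertain to \cite[Lemma 3.2]{SuTa97} rather than Lemma 3.7), then the three-dimensional base case must actually be proved, and that is the substantive missing content.
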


\begin{proof}
We resume the notation from the proof of Theorem~\ref{th_exp}.   
By Remark~\ref{tsr5} and Remark~\ref{tsr_prel} we obtain 
$$\tsr(\Ac/\Jc_{n-1}) \le \tsr(\Ac)\le \max\{2, \tsr(A/\Jc_{n-1})\}. $$
By \cite[Lemma 3.7]{SuTa97} we have that  $\tsr(\Ac)=1$ if and only if $G=\RR$. 
Assume that $\tsr(\Ac)\ge 2$. 
We have that  $\Ac/\Jc_{n-1}\simeq\Cc_0([\gg,\gg]^\perp)$, 
hence 
$\tsr(\Jc_n/\Jc_{n-1})=[r/2]+1$ by \eqref{tsr_prel_eq1}. 
Then, using also that $\max\{1+a,1+b\}=1+\max\{a,b\}$ for all $a,b\in\RR$, 
the conclusion follows directly. 
\end{proof}

\begin{remark}\label{cor_exp_tsr} 
\normalfont
 The formula provided by Theorem~\ref{th_exp_tsr} agrees with \cite[Th. 3.9]{SuTa97} 
in the case of exponential Lie groups. 
However, the proof of \cite[Th. 3.9]{SuTa97} seems to be incomplete 
because it is based on \cite[Lemma 3.2]{SuTa97}, 
whose proof requires 
the hypothesis 
on the finite-dimensionality of spectra of the continuous-trace algebras. 
A similar issue was pointed out at the top of \cite[page 100]{ArKa12}. 
\end{remark}

\end{document}